\theoremstyle{plain}
\newtheorem{theorem}{Theorem}[section]
\newtheorem{lemma}[theorem]{Lemma}
\newtheorem{corollary}[theorem]{Corollary}
\theoremstyle{definition}
\newtheorem{remark}[theorem]{Remark}
\newtheorem*{theorem*}{Theorem}
\newcommand{\CC}{{\mathbb C}}
\newcommand{\HH}{{\mathbb H}}
\newcommand{\PP}{{\mathbb P}}
\newcommand{\FS}{\mathbf{F}}
\def\Um{\Lambda(M)}
\title
[K\"ahler submanifolds and the Umehara algebra]
{K\"ahler submanifolds and the Umehara algebra}
\author{Xiaoliang Cheng, Antonio J. Di Scala and Yuan Yuan}
\address{\normalfont Xiaoliang Cheng, Department of Mathematics, Jilin Normal University, Siping 136000, China.}
\email{chengxiaoliang92@163.com}
\address{\normalfont Antonio J. Di Scala, Dipartimento di Scienze Matematiche `G.L. Lagrange', Politecnico di Torino, Corso Duca degli Abruzzi 24, 10129, Torino, Italy.}
\email{antonio.discala@polito.it}
\address{\normalfont Yuan Yuan, Department of Mathematics, Syracuse University, Syracuse, NY 13244, USA.}
\email{yyuan05@syr.edu}
\thanks{X. Cheng is supported in part by National Natural Science Foundation of China grant NSFC-11301215; A.J. Di Scala is member of GNSAGA of INdAM, Italia; Y. Yuan is supported in
part by National Science Foundation grant DMS-1412384 and the seed grant program at Syracuse University.}
\date{}
\keywords{complex submanifolds, Fubini-Study spaces, indefinite K\"ahler metrics, isometric embedding, Nash algebraic}
\subjclass[2010]{32H02, 32Q40, 53B35}
\begin{document}

\maketitle

\begin{abstract}
We show that the indefinite complex space form $\mathbb{C}^{r, s}$ is not a relative to the
indefinite complex space form $\CC\PP_l^N(b)$ or $ \CC\HH_m^N(b)$.
We further study whether two Fubini-Study spaces are relatives or not.
\end{abstract}

\section{Introduction}

Problems about holomorphic and isometric embeddings are classical questions in complex and differential geometry.
Starting with Bochner's paper \cite{[B]} such questions have been studied extensively by
many authors e.g. \cite{[C], [CU], [DL2], [HJL], [HY1], [Mo3], [Ng], [U1], [YZ]}. In his PhD. Thesis \cite{[C]}, E. Calabi obtained
the existence, uniqueness and global extension of a local holomorphic
isometry from a complex K\"ahler manifold into a complex space form (also called Fubini-Study spaces), among many other important
results. Calabi's results show that the complex version of Nash's theorem is not true as it was recently asked in \cite[mathoverflow]{[mathoverflow]}.
In particular Calabi proved that any Fubini-Study space cannot
be locally isometrically embedded into another Fubini-Study space
with a different curvature sign with respect to the canonical
K\"ahler metrics, and he further gave the sufficient and necessary condition for one Fubini-Study space to be embedded into another one.\\

The key object in Calabi's work is his \emph{diastasis function}. Unlike the K\"ahler potential, Calabi's diastasis is unique.
Actually, the diastasis is a clever choice of a potential around each point of an analytic K\"ahler manifold.
Thanks to the diastasis Calabi was able to reduced metric tensor equations to
functional identities. This idea turns out to be quite useful in problems with holomorphic and isometry immersions, quantization problems e.g. \cite{[DIL]}, \cite[page 63]{[D&Q]} and even in related questions of number theory \cite{[CU], [Mo2], [Mo3]}.\\

Umehara \cite{[U2]} later generalized Calabi's existence and uniqueness results for holomorphic isometries from a complex manifold with an indefinite K\"ahler metric into an indefinite complex space form.
On the other hand, Umehara \cite{[U1]} studied an interesting question
whether two complex space forms can share a common submanifold with
the induced metrics. Following Calabi's idea, Umehara proved that
two complex space forms with different curvature signs cannot share
a common K\"ahler submanifold \cite{[U1]}\cite{[U2]}.\\

Two K\"ahler manifolds are called \emph{relatives} when they share a
common K\"ahler manifold i.e. a complex submanifold of one of them endowed with the induced metric is biholomorphically isometric to a complex submanifold of the other endowed with the induced metric.
It was shown in \cite{[DL2]} that Hermitian symmetric spaces of different compact types are not relatives.
In addition, the fact that Euclidean spaces and Hermitian symmetric spaces of compact types are not relatives follows from Umehara's result \cite{[U1]} and the classical
Nakagawa-Takagi embedding of Hermitian symmetric spaces of compact type into complex projective spaces.
Finally, it was shown in \cite{[HY2]} that Euclidean spaces and Hermitian symmetric spaces
of non-compact types are not relatives.\\

In this paper, we consider the relativity problems for two indefinite complex space forms as well as for two Fubini-Study spaces.
We show that the flat indefinite complex space form $\CC^{N,s}$ is not a relative with the non flat indefinite complex space forms  $\CC\PP_{s'}^{N'}(b)$, $ \CC\HH_{s'}^{N'}(b)$ (see Corollary \ref{corollary} below). The relativity problem can be reformulated in terms of the so called Umehara's algebra.
We use the techniques developed in \cite{[HY1]} and \cite{[HY2]} to obtain non-trivial improvements of Umehara's results \cite{[U2]} (see Theorem \ref{UA} below).
In the last section we give necessary and sufficient conditions for two Fubini-Study space forms $\FS(n,b)$ and $\FS(m,a)$ to be relatives.

\bigskip

{\bf Acknowledgement}: We thank Ming Xiao for pointing out a mistake in the previous version of our paper. This material is based upon work supported by the National Science Foundation under Grant No. 0932078 000 while the third author was in residence at the Mathematical Sciences Research Institute in Berkeley, California, during the 2016 Spring semester.

%%%%%%%%%%%%%%%%%%%%%%%%%%%%%%%%%%%%%%%%%%%%%%%%%%%%%%%%%%%%%%%%%%%%

\section{The Umehara algebra and the relatively problem}

Umehara introduces in \cite{[U2]}  the associate algebra $\Um$ of a complex manifold $M$. Since the interest here is the local existence of a K\"ahler submanifold at some point $p$ in $M$. We modify Umehara's definition as follows. Let $\mathcal{O}_p$ denote the local ring of germs of holomorphic functions at $p$.  %here is $\Um$ should be regarded more as sheaf than an algebra or ring e.g. think when $M$ is compact, but anyway...
Define %Namely,
\[ \wedge_p := \left\{ f : f = \sum_{i=1}^{n} r_i|\chi_i|^2 , \chi_i \in \mathcal{O}_p , r_i \in \mathbb{R} \right\} , \]
and
let $K_p$ be the field of fractions of $\wedge_p$. Notice that the germs of real numbers, denoted by $\mathbb{R}_p$, belong to $K_p$.
%we call $f \in \mathbb{R}_p$ a constant if $f$ is a real number.
%
%{\red I think with your techniques we can prove (or disprove) the following generalizations of \cite[page 520, Proposition]{Um88}:}
%
The main result is following local characterization of Umehara's algebra.

\begin{theorem}\label{UA}
Let $p$ be a fixed point on a complex manifold $M$ and let $\chi_1, \cdots, \chi_s \in \mathcal{O}_p$ be non-constant germs of holomorphic functions at $p$ such that $\chi_1(p)= \cdots = \chi_s(p)=0$. For non-zero real numbers $r_1, \cdots, r_s$, the following statements hold:

\begin{itemize}

\item[(i)] $\log \left(1+ \sum_{i=1}^s r_i |\chi_i|^2\right) \not\in K_p \setminus \mathbb{R}_p$;

\item[(ii)] $\exp \left( \sum_{i=1}^s r_i |\chi_i|^2\right) \not\in K_p \setminus \mathbb{R}_p$;

\item[(iii)] \rm{If} $\left(1+ \sum_{i=1}^s r_i |\chi_i|^2\right)^{\alpha} \in K_p \setminus \mathbb{R}_p$, then $\alpha \in \mathbb{Q}$.

\end{itemize}
\end{theorem}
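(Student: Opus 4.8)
The plan is to complexify the problem by polarization and then exploit a torus symmetry. First I would replace $\bar z$ by an independent holomorphic variable $\xi$, so that each generator $|\chi_i|^2=\chi_i\bar\chi_i$ becomes a product $\chi_i(z)\,\widetilde\chi_i(\xi)$ of a germ in $z$ and a germ in $\xi$. Under this polarization the complexification of $\wedge_p$ is the ring $R$ of germs at $(p,\bar p)$ expressible as finite sums $\sum_j h_j(z)k_j(\xi)$, i.e. the finite-rank holomorphic germs, and $K_p$ embeds into the fraction field $K_p^{\mathbb C}$ of $R$. Writing $U=\sum_i r_i\chi_i\widetilde\chi_i$, the three functions $\log(1+U)$, $\exp(U)$ and $(1+U)^{\alpha}$ are honest holomorphic germs at $(p,\bar p)$ because $U(p,\bar p)=0$, so $1+U$ is near $1$; hence it suffices to prove that, apart from the constant case $U\equiv 0$, none of them lies in $K_p^{\mathbb C}$, since $K_p\subseteq K_p^{\mathbb C}$. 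I note for later use that $R$ is closed under $\partial_z$ and $\partial_\xi$, so $K_p^{\mathbb C}$ is a differential field and I may freely pass to logarithmic derivatives $\partial_z F/F$.

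The key idea is a grading/torus reduction. After restricting to a generic one-dimensional curve through $p$ and, in the model case of a single dominant generator, choosing an analytic coordinate $t$ with $\chi(t)=t^m$, the balanced quantity $X=\chi\widetilde\chi=(t\tau)^m$ is fixed by the $\mathbb C^{*}$-action $t\mapsto\lambda t,\ \tau\mapsto\lambda^{-1}\tau$, and so are all three target functions. The central lemma I would prove is that the fixed field $(K_p^{\mathbb C})^{\mathbb C^{*}}$ equals the field $\mathbb C(X_1,\dots,X_r)$ of rational functions in the balanced generators: grading $R=\bigoplus_d R_d$ by weight, any invariant element of the fraction field is a quotient of two homogeneous elements of equal weight, and such a quotient simplifies to a rational function of the $X_i$. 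Granting this lemma, each target function, being invariant, must be a rational function of the balanced generators.

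With this reduction the three statements become transcendence facts. For (i) and (ii), $\log\!\big(1+\sum_i r_iX_i\big)$ and $\exp\!\big(\sum_i r_iX_i\big)$ are transcendental over $\mathbb C(X_1,\dots,X_r)$ (restrict to a line in the $X$-variables to recover the classical one-variable statements), so neither lies in $K_p^{\mathbb C}$ unless it is constant, which forces $U\equiv 0$; this settles (i) and (ii). For (iii), if $(1+U)^{\alpha}=F\in K_p^{\mathbb C}\setminus\mathbb R_p$, then after the reduction $g^{\alpha}$ with $g=1+\sum_i r_iX_i$ nonconstant is algebraic over $\mathbb C(X_1,\dots,X_r)$; a Puiseux/ramification analysis around $\{g=0\}$ shows that only finitely many fractional exponents $\{\alpha\bmod 1\}$ can appear, which is possible precisely when $\alpha\in\mathbb Q$. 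The remark that $\Sigma=\{\beta:(1+U)^{\beta}\in K_p^{\mathbb C}\}$ is a subgroup of $(\mathbb C,+)$ containing $1$, combined with statement (i), can be used to package (iii) uniformly.

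The main obstacle is the passage from this model situation, where one monomial generator makes the torus manifest, to a general finite-rank $U$ built from several germs $\chi_1,\dots,\chi_s$ that are neither monomial nor algebraically independent; in that generality no single $\mathbb C^{*}$ fixes $U$. To handle this I would work in the universal Segre picture, treating $w_i=\chi_i(z)$ and $\eta_i=\widetilde\chi_i(\xi)$ as coordinates carrying the full torus $(\mathbb C^{*})^{s}$ acting by $w_i\mapsto\lambda_iw_i,\ \eta_i\mapsto\lambda_i^{-1}\eta_i$, proving the invariants lemma there and descending along $z\mapsto(\chi_i(z))$; controlling this descent, together with the algebraic relations among the $X_i$, is the delicate technical heart, and is exactly where the hypotheses that the $\chi_i$ be non-constant and vanish at $p$ enter. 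A secondary subtlety is to verify that the ramification in (iii) genuinely can produce non-integer rational exponents, so that the conclusion $\alpha\in\mathbb Q$ is the sharp necessary condition rather than $\alpha\in\mathbb Z$.
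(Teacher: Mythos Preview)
Your approach is genuinely different from the paper's, and the torus--invariance idea is appealing, but the argument as written has a real gap at its ``central lemma.'' You assert that $R$ is graded by weight, $R=\bigoplus_d R_d$, and that an invariant element of $\mathrm{Frac}(R)$ is therefore a quotient of homogeneous elements of the same weight, hence rational in the balanced generators. But $R$ is a ring of convergent germs, not polynomials: a rank--one element such as $e^{t}\in R$ has infinitely many nonzero weight components $t^{d}/d!$, so $R\neq\bigoplus_d R_d$ and the standard graded--ring argument does not apply. Proving that a $\mathbb{C}^{*}$--invariant element of $\mathrm{Frac}(R)$ is rational in $t\tau$ is essentially as hard as the one--generator case of the theorem itself; you would need an independent argument exploiting the finite--rank condition, and none is supplied. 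You also explicitly leave the passage from the single--generator model to general $\chi_{1},\dots,\chi_{s}$ as ``the delicate technical heart'' without carrying it out, so even granting the model case the proof is incomplete.

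For comparison, the paper does not attempt a structural description of the invariant subfield. It polarizes, assumes a putative identity of the form
\[
\log\Bigl(1+\sum_{\sigma}h_{\sigma}(z)\bar h_{\sigma}(w)-\sum_{\tau}k_{\tau}(z)\bar k_{\tau}(w)\Bigr)=\frac{\sum f_{i}\bar f_{i}-\sum g_{j}\bar g_{j}}{\sum f'_{i}\bar f'_{i}-\sum g'_{j}\bar g'_{j}},
\]
differentiates repeatedly in $w$ and evaluates at $w=0$ to obtain an infinite linear system $P=A\cdot X+\text{h.o.t.}$ in the unknowns $h_{\sigma}(z),k_{\tau}(z)$. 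Linear independence of the $h_{\sigma},k_{\tau}$ forces $A$ to have full rank, and the implicit function theorem then makes each $h_{\sigma},k_{\tau}$ a Nash algebraic function of the $f_{i},g_{j},f'_{i},g'_{j}$. The contradiction comes from a growth comparison (logarithmic versus algebraic) along a suitable direction, after reducing to the case where all the data are Nash algebraic in $z$ and auxiliary variables; parts~(ii) and~(iii) follow by the same mechanism after a logarithmic differentiation. This route avoids any structural claim about invariant subfields and handles arbitrary finite collections $\chi_{1},\dots,\chi_{s}$ uniformly.
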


The indefinite complex Euclidean space $\CC^{N,s}(0 \leq s \leq N)$ is the complex linear space $\CC^N$ with the indefinite K\"ahler metric $$\omega_{\CC^{N,s}} = \sqrt{-1} \left( \sum_{i=1}^{N-s} d z_i \wedge d\bar z_i - \sum_{j=1}^s d z_{N-s +j} \wedge d \bar z_{N-s+j} \right) = \sqrt{-1} \partial \bar\partial \left(\sum_{i=1}^{N-s} |z_i|^2 - \sum_{j=1}^s |z_{N-s+j}|^2 \right).$$
%
%
% Thus, we have its diastasis function:
% $$D_{r,s}(p,q)=\sum_{i=1}^{r}| z_i(q)-z_i(p)|^2-\sum_{j=1}^s| z_{j+r}(q)-z_{j+r}(p)|^2.$$
The indefinite complex projective space $\CC\PP_s^N(b)(0\leq s \leq N)$ of constant holomorphic sectional curvature $b>0$ is the open submanifold $\{(\xi_0, \cdots, \xi_N) \in \mathbb{C}^{N+1}: \sum_{i=0}^{N-s} |\xi_i|^2 - \sum_{j=0}^{s-1} |\xi_{N-j}|^2 \} / \CC^*$ of $\CC\PP^N$. The indefinite K\"ahler metric of $\CC\PP_s^N(b)$ is given by $$\omega_{\CC\PP_s^N(b)} = \frac{\sqrt{-1}}{b} \partial \bar\partial \log \left( \sum_{i=0}^{N-s} |\xi_i|^2 - \sum_{j=0}^{s-1} |\xi_{N-j}|^2\right) .$$ The indefinite complex hyperbolic space
%The diastasis is
%  $$D(p,q)=\frac{2}{b}\log(1+\sum_{i=1}^{N-l}| z_i(q)-z_i(p)|^2-\sum_{j=1}^l| z_{j+r}(q)-z_{j+r}(p)|^2).$$%
%Similarly, we can defined
 $ \CC\HH_s^N(b) (0 \leq s \leq N)$ of constant holomorphic sectional curvature $b<0$ is obtained from $\CC\PP^N_{N-s}(-b)$ with indefinite K\"ahler metric
 $$ \omega_{\CC\HH_s^N(b)} = - \frac{\sqrt{-1}}{(-b)} \partial \bar\partial \log \left( \sum_{i=0}^{s} |\xi_i|^2 - \sum_{j=0}^{N- s-1} |\xi_{N-j}|^2\right).$$ %The $\CC^{r,s},\CC\PP_l^N(b)$ and $ \CC\HH_m^N(b)$ are indefinite Kaehler manifolds, and  they are called the indefinite complex space forms with index $2s$.
Under inhomogeneous coordinates $(z_1, \cdots, z_N) = (\xi_1 / \xi_0, \cdots, \xi_N / \xi_0)$, for instance, assuming  $\xi_0 \not=0 $, the metrics are given by
$$\omega_{\CC\PP_s^N(b)} = \frac{\sqrt{-1}}{b} \partial \bar\partial \log \left( 1+ \sum_{i=1}^{N-s} |z_i|^2 - \sum_{j=0}^{s-1} |z_{N-j}|^2\right) $$ and
$$\omega_{\CC\HH_s^N(b)} = - \frac{\sqrt{-1}}{(-b)} \partial \bar\partial \log \left(1+ \sum_{i=1}^{s} |z_i|^2 - \sum_{j=0}^{N- s-1} |z_{N-j}|^2\right).$$
In particular, when $s=0$, $\CC^{N,0}, \CC\PP_s^N(b), \CC\HH_s^N(b)$ are just the standard complex Euclidean, projective, hyperbolic space, respectively.

\medskip

Furthermore, suppose that $D$ is a complex manifold such that  there exist  holomorphic maps $F=(F_1, \cdots, F_N): D \rightarrow \CC^{N,s}$ and $L=(L_1, \cdots, L_{N'}): D \rightarrow \CC\PP_{s'}^{N'}(b)$ (or $\CC\HH_{s'}^{N'}(b)$) with $F^*\omega_{\CC^{N,s}} = L^*\omega_{\CC\PP_{s'}^{N'}(b)}$ (or $F^*\omega_{\CC^{N,s}} = L^* \omega_{\CC\HH_{s'}^{N'}(b)}$) on $D$.
Fixing $x \in D$, without loss of generality, we assume $F(x)=0, L(x)=0$ by composing the automorphisms on $ \CC^{N,s}$ and $\CC\PP_{s'}^{N'}(b)$ (or $\CC\HH_{s'}^{N'}(b)$). By the standard argument to get rid of $\partial\bar\partial$ as in the proof of Lemma \ref{al}, we have
$$ \log \left( 1+ \sum_{i=1}^{N'-s'} |L_i(z)|^2 - \sum_{j=0}^{s'-1} |L_{N'-j}(z)|^2\right) = b \left( \sum_{i=1}^{N-s} |F_i(z)|^2 - \sum_{j=1}^s |F_{N-s+j}(z)|^2 \right)$$ or
$$ \log \left(1+ \sum_{i=1}^{s'} |L_i(z)|^2 - \sum_{j=0}^{N'- s'-1} |L_{N'-j}(z)|^2\right) =b \left( \sum_{i=1}^{N-s} |F_i(z)|^2 - \sum_{j=1}^s |F_{N-s+j}(z)|^2 \right).$$
This contradicts to Theorem \ref{UA} (i).
 %As a consequence, the $\CC^{r,s}$ and $\CC\PP_l^N(b)$ (or $ \CC\HH_m^N(b)$) cannot be relatives. Indeed, we prove the following non-existence results:
Therefore, we proved the following corollary.

\begin{corollary}\label{corollary}
Let $(D, \omega_D)$ be a K\"ahler submanifold of  $\CC^{N,s}$. Then any open subset of $D$ cannot be a K\"ahler submanifold of $\CC\PP_{s'}^{N'}(b)$ or that of $ \CC\HH_{s'}^{N'}(b)$. In other words, $\CC^{N,s}$ and $\CC\PP_{s'}^{N'}(b)$ (or $ \CC\HH_{s'}^{N'}(b)$) cannot be relatives.
\end{corollary}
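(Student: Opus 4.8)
The plan is to show that if $\CC^{N,s}$ and $\CC\PP_{s'}^{N'}(b)$ (or $\CC\HH_{s'}^{N'}(b)$) were relatives, one would be forced into a functional identity of exactly the shape forbidden by Theorem \ref{UA}(i). By definition, being relatives means there is a connected complex manifold $D$ carrying holomorphic immersions into the two space forms that induce one and the same K\"ahler metric $\omega_D$; equivalently, a common K\"ahler submanifold. First I would pass to a germ at a point $p$, so that all the holomorphic data live in the local ring $\mathcal{O}_p$ and the induced-metric potentials live in the algebra $\wedge_p$, making Theorem \ref{UA} directly applicable. Since the statement of the Corollary is the contrapositive of ``being relatives'', it suffices to rule out that an open subset of a K\"ahler submanifold $D$ of $\CC^{N,s}$ is simultaneously a K\"ahler submanifold of the non-flat ambient space.

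Concretely, I would fix $x \in D$ and, after composing with automorphisms of the two targets, normalize so that $F(x)=0$ and $L(x)=0$, where $F:D\to\CC^{N,s}$ and $L:D\to\CC\PP_{s'}^{N'}(b)$ are the two immersions. The isometry condition $F^{*}\omega_{\CC^{N,s}} = L^{*}\omega_{\CC\PP_{s'}^{N'}(b)}$ becomes an equality of $\partial\bar\partial$ of the two pulled-back K\"ahler potentials: the quadratic potential on the Euclidean side and the logarithmic potential on the projective side. The next step is to remove the $\partial\bar\partial$: two potentials with the same $\partial\bar\partial$ differ by the real part of a holomorphic function, and the standard argument from the proof of Lemma \ref{al} absorbs this pluriharmonic ambiguity, using $F(x)=L(x)=0$ and the vanishing order of the normalized potentials at $x$, to yield the clean identity
\[
\log\Big( 1 + \sum_{i=1}^{N'-s'}|L_i|^2 - \sum_{j=0}^{s'-1}|L_{N'-j}|^2 \Big) = b\Big( \sum_{i=1}^{N-s}|F_i|^2 - \sum_{j=1}^{s}|F_{N-s+j}|^2 \Big),
\]
with the identical computation in the hyperbolic case.

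With this identity in hand the conclusion follows from Theorem \ref{UA}(i). The right-hand side is by construction an element of $\wedge_p \subseteq K_p$, so the left-hand side also lies in $K_p$. Discarding those $L_i$ that are constant (hence identically zero, as $L_i(x)=0$), the left-hand side has the form $\log(1 + \sum_i r_i|\chi_i|^2)$ with non-constant $\chi_i$ vanishing at $p$ and $r_i\in\{+1,-1\}$, so Theorem \ref{UA}(i) shows it cannot be a non-constant element of $K_p$. Therefore in all cases the left-hand side is a constant germ, which at $x$ forces $\sum \pm|L_i|^2\equiv 0$ and hence the right-hand side to vanish identically; but then $F^{*}\omega_{\CC^{N,s}} = \sqrt{-1}\,\partial\bar\partial\big(\sum\pm|F_i|^2\big)=0$, contradicting that $\omega_D$ is a genuine non-degenerate K\"ahler metric on the submanifold. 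This contradiction proves that no open subset of $D$ is a K\"ahler submanifold of $\CC\PP_{s'}^{N'}(b)$ or $\CC\HH_{s'}^{N'}(b)$, so $\CC^{N,s}$ is not a relative of either.

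I expect the main obstacle to be the middle step, the passage from equality of K\"ahler forms to the exact normalized identity. One must control the pluriharmonic term so that the constant $1$ inside the logarithm survives and the expression is genuinely of the form required by Theorem \ref{UA}(i); this is precisely where the base-point normalization and the diastasis rigidity underlying Lemma \ref{al} are used. By contrast, the final non-degeneracy argument excluding the trivial constant case is routine once $\omega_D$ is known to be a bona fide K\"ahler metric.
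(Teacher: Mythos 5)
Your proposal is correct and takes essentially the same route as the paper: normalize $F(x)=L(x)=0$ by composing with automorphisms, remove $\partial\bar\partial$ by the standard argument of Lemma \ref{al} to obtain the identity $\log\left(1+\sum_i \pm|L_i|^2\right) = b\left(\sum_i \pm|F_i|^2\right)$, and conclude by Theorem \ref{UA}(i). Your explicit handling of the degenerate case where the logarithm lies in $\mathbb{R}_p$ (forcing $F^*\omega_{\CC^{N,s}}=0$, contradicting non-degeneracy of $\omega_D$) is a small refinement the paper leaves implicit.
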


%%%%%%%%%%%%%%%%%%%%%%%%%%%%%%%%%%%%%%%%%%%%%%%%%%%

\section{Indefinite complex space forms}

\begin{lemma}[cf. Theorem 3.2 in  \cite{[U2]}]\label{al}
Let $h_\sigma, k_\tau$ be the germs of holomorphic functions at $p \in U$ with $h_\sigma(p)=k_\tau(p)=0$ for $1\leq \sigma \leq r, 1\leq \tau \leq s$. Then there exist linearly independent germs of holomorphic functions $h'_\sigma, k'_\tau$ for $1\leq \sigma \leq r', 1\leq \tau \leq s'$ such that
$$\sum_{\sigma=1}^r | h_\sigma |^2-\sum_{\tau=1}^s| k_{\tau} |^2 = \sum_{\sigma=1}^{r'} | h'_\sigma |^2-\sum_{\tau=1}^{s'} | k'_{\tau} |^2.$$
\end{lemma}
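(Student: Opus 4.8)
The plan is to recognize the left-hand side as the Hermitian form attached to a single Hermitian matrix and then to put that matrix into its Sylvester normal form. First I would let $V \subseteq \mathcal{O}_p$ be the finite-dimensional complex vector space spanned by $h_1, \dots, h_r, k_1, \dots, k_s$. Since each of these germs vanishes at $p$, the space $V$ lies in the maximal ideal of $\mathcal{O}_p$, so every element of $V$ vanishes at $p$; this is what will guarantee that the output functions still vanish at $p$. Choosing a basis $e_1, \dots, e_n$ of $V$, I would write $h_\sigma = \sum_{a} A_{\sigma a} e_a$ and $k_\tau = \sum_a B_{\tau a} e_a$ for complex matrices $A = (A_{\sigma a})$ and $B = (B_{\tau a})$.

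Substituting these expressions and collecting terms, I would obtain
\[ \sum_{\sigma=1}^r |h_\sigma|^2 - \sum_{\tau=1}^s |k_\tau|^2 = \sum_{a,b=1}^n H_{ab}\, e_a \overline{e_b}, \qquad H_{ab} := \sum_{\sigma} A_{\sigma a}\overline{A_{\sigma b}} - \sum_\tau B_{\tau a}\overline{B_{\tau b}} . \]
The matrix $H$ is Hermitian, being the difference of the two positive semidefinite Hermitian matrices coming from the $h$'s and from the $k$'s; note that any linear dependence among the original germs is now encoded as a rank deficiency of $H$.

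Next I would invoke Sylvester's law of inertia, i.e. the congruence normal form for Hermitian forms: there is an invertible matrix $P$ such that $H = P^* D P$ with $D = \mathrm{diag}(\epsilon_1, \dots, \epsilon_n)$ and each $\epsilon_c \in \{1, -1, 0\}$. Setting $\phi_c := \sum_{b} P_{cb} e_b$, a direct computation gives $\sum_{c} \epsilon_c |\phi_c|^2 = \sum_{a,b} H_{ab} e_a \overline{e_b}$, which equals the original expression. The germs $\phi_1, \dots, \phi_n$ are $\mathbb{C}$-linearly independent because $P$ is invertible and the $e_b$ are independent, and each $\phi_c$ vanishes at $p$ since it lies in $V$.

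Finally I would discard the indices with $\epsilon_c = 0$, relabel those $\phi_c$ with $\epsilon_c = 1$ as $h'_1, \dots, h'_{r'}$ and those with $\epsilon_c = -1$ as $k'_1, \dots, k'_{s'}$, yielding the asserted identity; since $\{h'_\sigma\} \cup \{k'_\tau\}$ is a subset of the linearly independent family $\{\phi_c\}$, it is itself linearly independent. There is no serious obstacle here. The only points that require care are using a congruence (Sylvester) rather than a unitary diagonalization, so that the surviving coefficients are exactly $\pm 1$ rather than arbitrary real eigenvalues---the factors $\sqrt{|\lambda_c|}$ being absorbed into the $\phi_c$---and keeping track of the conjugations so that $H$ comes out genuinely Hermitian.
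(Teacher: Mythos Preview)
Your argument is correct, but it follows a different route from the paper's proof. The paper simply quotes Umehara's Theorem~3.2 as a black box to write
\[
F=\mathrm{Re}(\phi_0)+\sum_{\sigma=1}^{r'}|\phi_\sigma|^2-\sum_{\tau=1}^{s'}|\phi_{\tau+r'}|^2
\]
with $\phi_1,\dots,\phi_{r'+s'}$ linearly independent and vanishing at $p$, and then observes by comparing Taylor expansions that the pluriharmonic term $\mathrm{Re}(\phi_0)$ must vanish, since the left side has no purely holomorphic or antiholomorphic terms. You instead unpack that black box directly: you recognize the expression as a Hermitian form on the finite-dimensional span $V$ of the given germs and diagonalize it by congruence (Sylvester). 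This is more elementary and self-contained---it is essentially a proof of the special case of Umehara's theorem that is actually needed here, and it makes transparent why no $\mathrm{Re}(\phi_0)$ term can appear (your Hermitian form lives on $V\otimes\overline V$ from the outset). The paper's version is shorter on the page but leans on an external reference; yours stands on its own. One small remark: with your conventions $\sum_c\epsilon_c|\phi_c|^2$ literally computes to $\sum_{a,b}H_{ba}\,e_a\overline{e_b}$ rather than $\sum_{a,b}H_{ab}\,e_a\overline{e_b}$, but since $H$ is Hermitian the quadratic expression is real and the two agree, so there is no error.
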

\begin{proof}
Choose a  holomorphic coordinate $\{z\}$ at $p \in U $ such that $z(p)=0$ and define $F(z)= \sum_{\sigma=1}^r| h_\sigma(z) |^2-\sum_{\tau=1}^s| k_\tau (z)|^2 $. It follows from the definition that $F(z)$ is a real analytic function of finite rank. By Theorem 3.2 in \cite{[U2]}, there exit a pair of non-negative numbers $(r', s')$, a germ of holomorphic function $\phi_0$ and germs of linearly independent holomorphic functions $\phi_1,\cdots, \phi_N$ at $p$, such that
\begin{equation}\label{lemma}
F(z)=\text{Re}(\phi_0(z)) + \sum_{\sigma=1}^{r'}| \phi_\sigma(z) |^2-\sum_{\tau=1}^{s'}| \phi_{\tau+r'}(z) |^2,
\end{equation}
with  $\phi_1(0)=\cdots=\phi_{r'+s'}(0)=0.$ By comparing the Taylor expansion on the left and right sides of the equation (\ref{lemma}), it follows that $\phi_0$ is a constant function and thus $\phi_0 \equiv 0$.
\end{proof}

From now on, we can assume, without loss of generality, that $\{f_1, \cdots, f_l, g_1, \cdots, g_m\}$ and $\{h_1, \cdots, h_r, k_1, \cdots, k_s\}$ are sets of linearly independent holomorphic functions.
\medskip

{\bf Proof of Theorem \ref{UA}}
The idea of proof originates from \cite{[Hu1]} and \cite{[HY2]}. We prove the Theorem \ref{UA} (i) (ii) by contradiction. Choose a  holomorphic coordinate $\{z\}$ at $p \in U $ such that $z(p)=0$. For Part (i), suppose, on the contrary, that $\log \left(1+ \sum_{i=1}^n r_i |\chi_i|^2\right) \in K_p \setminus \mathbb{R}_p$. By rewriting $1+ \sum_{i=1}^n r_i |\chi_i(z)|^2 =1+ \sum_{\sigma=1}^r| h_\sigma(z) |^2-\sum_{\tau=1}^s| k_\tau (z)|^2$ with $\{h_1, \cdots, h_r, k_1, \cdots, k_s\}$ linearly independent, we may assume
\begin{equation}\label{albert1}
\log \left( 1+ \sum_{\sigma=1}^r| h_\sigma(z) |^2-\sum_{\tau=1}^s| k_\tau (z)|^2 \right) = \frac{\sum_{i=1}^l | f_i(z) |^2-\sum_{j=1}^m | g_j(z) |^2}{\sum_{i=1}^{l'} | f'_i(z) |^2-\sum_{j=1}^{m'} | g'_j(z) |^2}.\end{equation}
By intersecting $p$ with a certain one dimensional complex plane, we may assume that (\ref{albert1}) holds in an open set $U \subset \mathbb{C}$.
By polarization, (\ref{albert1}) is equivalent to
\begin{equation}\label{polar}
\log\left(1+\sum_{\sigma=1}^r h_\sigma(z) \bar h_\sigma(w) -\sum_{\tau=1}^s k_\tau (z) \bar k_\tau(w) \right)= \frac{\sum_{i=1}^l  f_i(z) \bar f_i(w)-\sum_{j=1}^m g_j(z) \bar g_j(w)}{\sum_{i=1}^{l'} f'_i(z) \bar f'_i(w)-\sum_{j=1}^{m'} g'_j(z) \bar g'_j(w)},
\end{equation}
where $(z, w) \in U\times\hbox{conj}({U})$, $\hbox{conj}({U})=\{z \in \CC | \bar z \in U\}$, and $\bar{\chi}_i(w) = \overline{\chi_i(\overline{w})}$.
%$\bar{g}_j(w) = \overline{g_j(\overline{w})}$, $\bar{h}_\sigma(w) = \overline{h_\sigma(\overline{w})}$, $\bar{k}_{\tau}(w) = \overline{k_{\tau}(\overline{w})}$.
\\  \
Taking $k$-th derivative of  the equation (\ref{polar}) in $w$ for $k=1,2,\cdots$, and then evaluating at $w=0$, we have the following matrix equation:
\begin{equation}\notag
P = A \cdot X + {\rm higher~order ~terms ~in~} X,
\end{equation}
where
\begin{equation}\notag
A=\begin{bmatrix}
 \cdots \frac{\partial \bar {h}_{\sigma}}{\partial w}(0) \cdots -\frac{\partial \bar {k}_{\tau}}{\partial w}(0) \cdots \\
\vdots\\
 \cdots \frac{\partial^k \bar {h}_{\sigma}}{\partial w^k}(0) \cdots -\frac{\partial^k \bar {k}_{\tau}}{\partial w^k}(0) \cdots \\
\vdots\\
\end{bmatrix}_{\infty\times(r+s),} ~
X=\begin{bmatrix}
\vdots\\ h_\sigma(z) \\ \vdots\\  k_\tau(z) \\ \vdots
\end{bmatrix}_{(r+s) \times 1,}
\text{and}~
P^t=\begin{bmatrix}
 \vdots\\ p_k \\  \vdots
\end{bmatrix}_{\infty\times1,}\end{equation} with each $p_k$ being rational function in $f_1(z),\cdots,f_l(z),g_1(z),\cdots,g_m(z)$ and \\
 $f'_1(z),\cdots,f'_{l'}(z),g'_1(z),\cdots,g'_{m'}(z)$.

We claim rank$(A)=r+s$, i.e. there exist $k=k_1, \cdots, k_{r+s}$ such that $k_1$-row to $k_{r+s}$-row in matrix $A$ are linearly independent and all other rows can be written as linear combinations of $k_1$-row up to $k_{r+s}$-row. Reorganize the matrices $A$ and $P$ by deleting rows other than $k_1$-row to $k_{r+s}$-row, denoted the corresponding matrices by $A_{r+s}, P_{r+s}$ respectively. We obtain the non-degenerate matrix equation $$P_{r+s} = A_{r+s} \cdot X + {\rm higher~order ~terms ~in~} X.$$
It follows by the implicit function theorem that each element in $X$ is a Nash algebraic function in $f_1(z),\cdots,f_l(z), g_1(z),\cdots,g_m(z)$ and  $f'_1(z),\cdots,f'_{l'}(z),g'_1(z),\cdots,g'_{m'}(z)$. Then one reach the contradiction by the similar argument in \cite{[HY2]} and the reader may refer to \cite{[HY2]} for the detailed proof. The idea is as follows. Suppose $f_1(z),\cdots,f_l(z),g_1(z),\cdots,g_m(z)$ and  $f'_1(z),\cdots,f'_{l'}(z),g'_1(z),\cdots,g'_{m'}(z)$ are all Nash algebraic functions in $z$. So are all $h_1, \cdots, h_r, k_1, \cdots, k_s$ by the above argument. Then the left hand side of the equation (\ref{albert1}) has logarithmic growth while the right hand side of the equation (\ref{albert1}) has polynomial growth as $z$ approaches the pole. If $f_1(z),\cdots,f_l(z),g_1(z),\cdots,g_m(z)$ and $f'_1(z),\cdots,f'_{l'}(z),g'_1(z),\cdots,g'_{m'}(z)$ are not all Nash algebraic functions. Then one can choose a maximal algebraic independent subset $S \subset \{f_1,\cdots,f_l,g_1,\cdots,g_m,  f'_1,\cdots,f'_{l'},g'_1,\cdots,g'_{m'}\}$ such that any element in $\{f_1,\cdots,f_l,g_1,\cdots,g_m, f'_1,\cdots,f'_{l'}, g'_1, \cdots, g'_{m'}, h_1, \cdots, h_r, k_1, \cdots, k_s\}$ is Nash algebraic functions in $z$ and elements in $S$.
Denote elements in $S$ by $\{X_1, \cdots, X_\kappa\}$ and we can write each $f_*(z), f'_*(z), g_*(z), g'_*(z), h_*(z), k_*(z)$ by Nash algebraic functions in $z, X$ given by $\hat f_*(z, X), f'_*(z, X), g_*(z, X), g'_*(z, X), h_*(z, X), k_*(z, X)$ respectively. By similar argument as in \cite{[HY2]}, we have
\begin{equation}\label{albert3}
\log \left(1+\sum_{\sigma=1}^r \hat h_\sigma(z, X) \bar h_\sigma(w) -\sum_{\tau=1}^s \hat k_\tau (z, X) \bar k_\tau(w) \right)= \frac{\sum_{i=1}^l  \hat f_i(z, X) \bar f_i(w)-\sum_{j=1}^m \hat g_j(z, X) \bar g_j(w)}{\sum_{i=1}^{l'} \bar f'_i(z, X) \bar f'_i(w)-\sum_{j=1}^{m'} \hat g'_j(z, X) \bar g'_j(w)}
\end{equation} for independent variables $z, w, X$.
Hence for fixed $w$ the left hand side of the equation (\ref{albert3}) has logarithmic growth while the right hand side of the equation (\ref{albert3}) has polynomial growth as $z$ approaches the pole.
We again reach a contradiction.

Now we show rank$(A)=r+s$.
Suppose rank$(A)=d<r+s$. Without loss of generality, we assume that the first $d $ columns are linearly independent in the
 coefficient matrix $A$,  writing $L_1,L_2,\cdots,L_d$. Then, for any $n$ with $d<n \leq r+s$, the $n$-th column is linear combination of $L_1,L_2,\cdots,L_d$, i.e.
\begin{equation}\notag\label{function1}
 L_n=\sum_{i=1}^{d}C_iL_i.
\end{equation}
In other words, the $n$-th element in $\{h_1, \cdots, h_r, k_1, \cdots, k_s\}$ can be written as linear combination of the first $d$ elements by the Taylor expansion, meaning $\{h_1, \cdots, h_r, k_1, \cdots, k_s\}$ is not linear independent. This is a contradiction. Thus we complete the proof of Theorem \ref{UA} (i).

Part (ii) follows from the similar argument. The only difference is to take logarithmic differentiation in $w$.

For Part (iii), assume \begin{equation}\label{equation11}
\left(1+\sum_{\sigma=1}^r h_\sigma(z) \bar h_\sigma(w) -\sum_{\tau=1}^s k_\tau (z) \bar k_\tau(w) \right)^\alpha= \frac{\sum_{i=1}^l  f_i(z) \bar f_i(w)-\sum_{j=1}^m g_j(z) \bar g_j(w)}{\sum_{i=1}^{l'} f'_i(z) \bar f'_i(w)-\sum_{j=1}^{m'} g'_j(z) \bar g'_j(w)}.
\end{equation}
By taking logarithmic differentiation in $w$ and applying the similar argument as for Part (i), we know that each element in $\{h_1(z), \cdots, h_r(z), k_1(z), \cdots, k_s(z)\}$
a Nash algebraic function in $f_1(z),\cdots,f_l(z), g_1(z),\cdots,g_m(z)$ and  $f'_1(z),\cdots,f'_{l'}(z),g'_1(z),\cdots,g'_{m'}(z)$. Suppose $f_1(z),\cdots,f_l(z),g_1(z),\cdots,g_m(z)$ and  $f'_1(z),\cdots,f'_{l'}(z),g'_1(z),\cdots,g'_{m'}(z)$ are all Nash algebraic functions in $z$. Then in (\ref{equation11}) we have a Nash algebraic function to the power of $\alpha$ is equal to another Nash algebraic function. Thus $\alpha \in \mathbb{Q}$. Otherwise, one can choose a maximal algebraic independent subset $S \subset \{f_1,\cdots,f_l,g_1,\cdots,g_m,  f'_1,\cdots,f'_{l'},g'_1,\cdots,g'_{m'}\}$ such that any element in $\{f_1,\cdots,f_l,g_1,\cdots,g_m, f'_1,\cdots,f'_{l'}, g'_1, \cdots, g'_{m'}, h_1, \cdots, h_r, k_1, \cdots, k_s\}$ is Nash algebraic functions in $z$ and elements in $S$.
\begin{comment}
Denote elements in $S$ by $\{X_1, \cdots, X_\kappa\}$ and we can write each $f_*(z), f'_*(z), g_*(z), g'_*(z), h_*(z), k_*(z)$ by Nash algebraic functions in $z, X$ given by $\hat f_*(z, X), f'_*(z, X), g_*(z, X), g'_*(z, X), h_*(z, X), k_*(z, X)$ respectively.
\end{comment}
By similar argument as above, we have
\begin{equation}\label{albert2}
\left(1+\sum_{\sigma=1}^r \hat h_\sigma(z, X) \bar h_\sigma(w) -\sum_{\tau=1}^s \hat k_\tau (z, X) \bar k_\tau(w) \right)^\alpha= \frac{\sum_{i=1}^l  \hat f_i(z, X) \bar f_i(w)-\sum_{j=1}^m \hat g_j(z, X) \bar g_j(w)}{\sum_{i=1}^{l'} \bar f'_i(z, X) \bar f'_i(w)-\sum_{j=1}^{m'} \hat g'_j(z, X) \bar g'_j(w)}
\end{equation} for independent variables $z, w, X$.
For fixed $w$, we have in (\ref{albert2}) a Nash algebraic function to the power of $\alpha$ is equal to another Nash algebraic function. Thus again $\alpha \in \mathbb{Q}$. This completes the proof of Theorem \ref{UA} (iii). \qed

% By the Taylor expansion, we have
%\begin{equation}\label{function11}
%( \bar {\phi}_1(w),\cdots ,\bar {\phi}_{r+s}(w))=(\sum_{k=1}^{\infty}\frac{\partial^k \bar {\phi}_1(w)}{\partial w^k}(0)\frac{w^k}{k!},\cdots,\sum_{k=1}^{\infty}\frac{\partial^k \bar {\phi}_{r+s}(w)}{\partial w^k}(0)\frac{w^k}{k!}).
%\end{equation}
%\medskip

%{\red
%\begin{question}
%It is true that if $f \in \wedge_p$ and $\frac{1}{f} \in \wedge_p$ then $f$ is constant?.
%\end{question}
%}

%%%%%%%%%%%%%%%%%%%%%%%%%%%%%%%%%%%%%%%%%%%%%%%%%%%%%%%%%%%%%%

\section{Fubini-Study spaces}

Along this section we use Calabi's original notation $\FS(n,b)$ for Fubini-Study spaces.
Namely, as in  \cite[pages 16 and 17]{[C]}, we denote with $\FS(n,b)$ a complex space form whose K\"ahler potential
is locally given by
\[ \frac{1}{b} \log(1 + b |Z|^2)\]
where $Z = (z_1, \cdots,z_n)$ and $|Z|^2 := \sum_{i=1}^n|z_i|^2$. So
\[  \FS(n,b) = \begin{cases} \mathbb{CP}^n_0(b) & \mbox{ if } b > 0 \, , \\
              \mathbb{CH}^n_0(b) & \mbox{ if } b < 0 \, .
\end{cases} \]

\begin{theorem}\label{relativesFS} Let $\FS(n,b)$ and $\FS(m,a)$ be two complex space forms
where $a,b \in \mathbb{R}^+$ are positive real numbers.
Assume that:
\begin{itemize}
\item[(i)] there are positive integers $s,r$ such that $s a = r b$,
\item[(ii)]  $m+n+1 >  \max \left\{ \binom{s+m}{s},\binom{r+n}{r}  \right\}$.
\end{itemize}
Then $\FS(n,b)$ and $\FS(m,a)$ are relatives.

\end{theorem}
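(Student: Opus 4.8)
The plan is to produce a common Kähler submanifold of $\FS(n,b)$ and $\FS(m,a)$ by placing isometric copies of both inside a single genuine projective space, exploiting that hypothesis (i) forces the two Veronese targets to carry the \emph{same} curvature. Put $c := sa = rb > 0$. The first step is a Veronese bookkeeping: choosing monomial coefficients $\lambda_\beta$ so that $1 + (sa)|V_s(z)|^2 = (1+a|z|^2)^s$, a multinomial expansion of the right-hand side exhibits a holomorphic map $V_s\colon \FS(m,a)\to \CC^{M'}$ with $M' := \binom{s+m}{s}-1$ whose pulled-back potential $\frac{1}{sa}\log(1+sa|V_s|^2)$ equals $\frac1a\log(1+a|z|^2)$; hence $V_s$ is an isometric embedding $\FS(m,a)\hookrightarrow \FS(M',sa)$. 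The analogous degree-$r$ map gives an isometric embedding $\FS(n,b)\hookrightarrow \FS(N',rb)$ with $N' := \binom{r+n}{r}-1$. By (i) both targets are $\FS(\,\cdot\,,c)$ with the \emph{identical} curvature $c$, which is the whole point of that hypothesis.

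Next I put both images into one $\CC\PP^L$. Set $L := \max\{M',N'\}$. The linear inclusion of coordinates gives a totally geodesic isometric embedding $\FS(M',c)\hookrightarrow \FS(L,c)$ (the potential is literally unchanged), and likewise for $\FS(N',c)$. Composing with the Veronese maps yields isometric embeddings of $\FS(m,a)$ and $\FS(n,b)$ into $\FS(L,c)$; let $\Sigma_m$ and $\Sigma_n$ denote the images, two smooth irreducible projective subvarieties of $\CC\PP^L$ of complex dimensions $m$ and $n$, each carrying the metric induced by $\omega_{\FS(L,c)}$. Here hypothesis (ii) enters: the inequality $m+n+1 > \max\{\binom{s+m}{s},\binom{r+n}{r}\}$ is exactly $m+n > \max\{M',N'\} = L$. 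Since $\FS(L,c)$ is, as a complex manifold, the genuine projective space $\CC\PP^L$ and $\Sigma_m,\Sigma_n$ are honest subvarieties, the projective dimension theorem applies: because $\dim\Sigma_m+\dim\Sigma_n = m+n > L$, the intersection $\Sigma_m\cap\Sigma_n$ is nonempty and every component has dimension $\ge m+n-L \ge 1$.

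To read off the conclusion, I pick a smooth point of $\Sigma_m\cap\Sigma_n$ lying on a top-dimensional component and take a connected positive-dimensional complex submanifold $Z$ through it, endowed with the restriction of $\omega_{\FS(L,c)}$. Because $V_s$ is an isometric embedding onto $\Sigma_m$, its restriction $V_s^{-1}(Z)\to Z$ is a biholomorphic isometry, so $V_s^{-1}(Z)\subseteq \FS(m,a)$ is a Kähler submanifold isometric to $Z$; symmetrically $V_r^{-1}(Z)\subseteq \FS(n,b)$ is a Kähler submanifold isometric to $Z$. Thus $V_s^{-1}(Z)$ and $V_r^{-1}(Z)$ are biholomorphically isometric, which is precisely a common Kähler submanifold, and $\FS(n,b)$ and $\FS(m,a)$ are relatives.

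The essential — and only genuinely projective — step is the intersection argument. In an affine space two subvarieties whose dimensions sum to the ambient dimension can be disjoint, but in $\CC\PP^L$ the dimension theorem guarantees they always meet in the expected codimension; this is exactly what converts the numerical inequality (ii) into an actual common submanifold, and I expect it to be the crux of the proof. Its legitimacy rests entirely on the curvature alignment of the first step: only because (i) makes both Veronese curvatures equal to $c$ can the two images be regarded as subvarieties of one \emph{metric} projective space $\FS(L,c)$, so that the single restricted form $\omega_{\FS(L,c)}|_Z$ simultaneously induces the correct metrics on both sides. The remaining points are routine — verifying the Veronese scaling constant and checking that a smooth point of the intersection gives a genuine positive-dimensional Kähler submanifold (one may, if convenient, first move $\Sigma_n$ by a unitary $U\in PU(L+1)$, which preserves $\omega_{\FS(L,c)}$ and hence all induced metrics, to arrange a transverse intersection).
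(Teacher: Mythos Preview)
Your argument is correct and follows essentially the same route as the paper: set $\kappa=sa=rb$, use the Veronese/Calabi embeddings to place both space forms isometrically inside a single $\FS(N,\kappa)$ with $N=\max\{\binom{s+m}{s}-1,\binom{r+n}{r}-1\}$, invoke the projective intersection theorem (Hartshorne, Theorem~7.2) to force a positive-dimensional intersection since $m+n>N$, and pass to smooth points. The only cosmetic difference is that you spell out the Veronese potential identity explicitly via the multinomial expansion, whereas the paper simply cites Calabi's Theorem~13.
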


For example $\FS(3,a)$ and $\FS(8,2a)$ are relatives for any $a > 0$.

\it Proof. \rm Let $\kappa = s a = rb > 0$ be the common value of the two numbers $sa$ and $rb$.
Then according to Calabi's \cite[Theorem 13, page 21]{[C]} both spaces $\FS(n,b)$ and $\FS(m,a)$ can be embedded into the bigger $\FS(N,\kappa)$ where
\[ N := \max \left\{ \binom{s+m}{s} -1 ,\binom{r+n}{r}-1 \right\} \, \, .\]
By Condition $(\mathrm{ii})$ and the well-known intersection theorem \cite[Theorem 7.2, page 48]{[Ha]} every irreducible component $Z$ of the intersection
$\FS(m,a) \bigcap \FS(n,b)$ has positive dimension. Hence $\FS(n,b)$ and $\FS(m,a)$ are relatives since $Z$ has an open subset of smooth points \cite[Theorem 5.3, page 33]{[Ha]}. $\Box$

\begin{remark}
%\begin{itemize}
%\item Note that $F(1,a)$ and $F(n,b)$ are relatives if and only if $b = k a$, for $k$ positive integer and $n \geq  k$.
%\item
Note that for positive integers $s, m$ with $s \leq m$, $\FS(m, s)$ and $\FS(m, 1)$ are relatives since $(\cdots, f_j(z), \cdots ): U \subset \mathbb{C} \rightarrow \mathbb{C}^m \subset \FS(m, s)$ and $(z, \cdots, z): U \subset \mathbb{C} \rightarrow \mathbb{C}^m \subset \FS(m, 1)$ with $f_j(z) = \sqrt{\frac{m^{j}}{s} \binom{s}{j}} z^j$ for $1 \leq j \leq s$ and $f_j(z) =0$ for $s < j \leq m$ satisfy  $$\frac{1}{s} \log \left( 1+ s \sum_j |f_j|^2 \right)=\log \left(1+ m |z|^2 \right).$$ Obviously, $2m+1 \ll \binom{m+s}{s}$ for $s \gg 1$. For instance, $\FS(2,2)$ and $\FS(2,1)$ are relatives but $2 + 2 + 1 = 5  < \binom{2 + 2}{2} = 6$. This example shows that (ii) in the above Theorem is not a necessary condition for $\FS(n,b)$ and $\FS(m,a)$ to be relatives. Actually, this can be explained as follows:  $\FS(1,1) \hookrightarrow \FS(n,1)$ by the totally geodesic embedding and $\FS(1,1) \hookrightarrow \FS(n,b)$ for $b \in \mathbb{N}$, $n \geq b \geq 1$ by Calabi's result \cite[Theorem 13, page 21]{[C]}. Thus, $\FS(n,b)$ and $\FS(n,1)$ are relatives but $2n + 1$ is not necessarily greater than $\binom{b + n}{b}$.
%\end{itemize}
\end{remark}

\begin{theorem}\label{litchi}
Let $a \not=0, b\not=0$.
Suppose that $\FS(n,b)$ and $\FS(m,a)$ are relatives. Then $ab>0$ and $a/b \in \mathbb{Q}$.
\end{theorem}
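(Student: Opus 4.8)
The plan is to reduce the relatives hypothesis to a single diastasis identity and then extract the two conclusions by genuinely different mechanisms. First I would assume $\FS(n,b)$ and $\FS(m,a)$ are relatives, so that there is a positive--dimensional Kähler manifold $M$ admitting local holomorphic isometric immersions $F=(F_1,\dots,F_n)$ into $\FS(n,b)$ and $G=(G_1,\dots,G_m)$ into $\FS(m,a)$ with equal induced metrics. Fixing a common point $p$ and composing with ambient automorphisms (which are isometries acting transitively), I normalize $F(p)=0$, $G(p)=0$. Running the same ``get rid of $\partial\bar\partial$'' procedure used just before Corollary~\ref{corollary} (i.e. Lemma~\ref{al}), together with the fact that Calabi's diastasis is intrinsic and hence realized identically by both immersions, I obtain
\[ \frac1b\log\Big(1+b\sum_{i=1}^n|F_i|^2\Big)=\frac1a\log\Big(1+a\sum_{j=1}^m|G_j|^2\Big). \]
Writing $U=\sum_i|F_i|^2$ and $V=\sum_j|G_j|^2$, I then intersect $M$ with a generic one--dimensional slice through $p$ on which $F$ and $G$ remain nondegenerate, so that $U,V$ are germs at $p$ of \emph{nonconstant} positive--type real--analytic functions; this is exactly the one--variable setting in which Theorem~\ref{UA} is proved.

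For the rationality $a/b\in\mathbb{Q}$, which I expect to hold \emph{regardless of the signs}, I would multiply the identity by $a$ and exponentiate (both $1+bU$ and $1+aV$ are positive near $p$) to get $(1+bU)^{a/b}=1+aV$. Here $1+bU=1+\sum_i b\,|F_i|^2$ is precisely of the admissible form $1+\sum r_i|\chi_i|^2$ with $r_i=b\neq0$ and $\chi_i=F_i\in\mathcal{O}_p$ nonconstant vanishing at $p$, while the right side $1+a\sum_j|G_j|^2$ is a nonconstant element of $\wedge_p$, hence lies in $K_p\setminus\mathbb{R}_p$. Theorem~\ref{UA}(iii) then immediately forces $a/b\in\mathbb{Q}$.

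For the sign statement $ab>0$ I would argue by contradiction, assuming $b>0>a$. Letting $D$ denote the common diastasis, the identity reads $e^{bD}=1+bU$ and $e^{aD}=1+aV=1-|a|V$; in particular $bU=e^{bD}-1=\sum_{i=1}^n|\sqrt b\,F_i|^2$ is a positive--type germ of \emph{finite} rank $\le n$. Eliminating $D$, however, gives $e^{bD}=(1-|a|V)^{b/a}=(1-|a|V)^{-\beta}$ with $\beta=-b/a>0$, whose binomial expansion $\sum_{k\ge0}\tfrac{(\beta)_k}{k!}(|a|V)^k$ has \emph{all positive} coefficients. Thus $bU=\sum_{k\ge1}c_kV^k$ with every $c_k>0$, and expanding each $V^k=\sum_{|\alpha|=k}\binom{k}{\alpha}|G^\alpha|^2$ exhibits $bU=\sum_{\alpha\ne 0}\tilde c_\alpha|G^\alpha|^2$ with all $\tilde c_\alpha>0$. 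Since no cancellation occurs, the rank of this positive--type germ equals $\dim_{\mathbb C}\mathrm{span}\{G^\alpha:|\alpha|\ge1\}$; as $V$ is nonconstant some $G_{j_0}$ is nonconstant, the powers $G_{j_0},G_{j_0}^2,\dots$ are linearly independent, and the rank is infinite, contradicting the bound $n$. Hence $ab>0$. (Equivalently, one may simply invoke Umehara's theorem that space forms of opposite curvature sign are never relatives.)

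I expect the sign step to be the main obstacle. The delicate point is that positivity of type alone does \emph{not} detect the curvature sign: when $a,b$ share a sign the analogous expansion of $(1+aV)^{b/a}$ has alternating coefficients, so cancellation can collapse the rank to a finite value and genuine relatives do exist (as Theorem~\ref{relativesFS} demonstrates). The opposite--sign obstruction must therefore be located not in positive--definiteness but in the \emph{absence of cancellation}, which is what forces infinite rank; the technical heart is making the rank of a positive--type germ well defined and independent of its representation (Calabi). By contrast, once Theorem~\ref{UA}(iii) is available the rationality conclusion is comparatively routine.
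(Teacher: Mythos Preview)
Your proposal is correct and follows essentially the same route as the paper: reduce the relatives hypothesis to the diastasis identity $(1+b\sum|F_i|^2)^{a/b}=1+a\sum|G_j|^2$, invoke Theorem~\ref{UA}(iii) for the rationality of $a/b$, and handle the sign by Umehara's argument. The only difference is that where the paper simply cites \cite{[U2]} for $ab>0$, you spell out the underlying infinite--rank mechanism (positive binomial coefficients forbid cancellation, forcing $bU$ to have infinite positive rank while it visibly has rank $\le n$); this is exactly the content of Umehara's argument, so your version is a faithful unpacking rather than a genuinely different proof.
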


\begin{proof}
Let $U \subset \mathbb{C}$ be a connected open set. Suppose that $\FS(n,b)$ and $\FS(m,a)$ are relatives. By composing with elements in holomorphic isometry groups, it is equivalent to the existence of holomorphic maps $H=(h_1, \cdots, h_m): U \rightarrow \FS(m, a), K=(k_1, \cdots, k_n): U \rightarrow \FS(n, a)$  with $H(0)=0, K(0)=0$ such that $$\frac{1}{a} \log \left(1+ a \sum_{i=1}^m |h_i(z)|^2 \right) = \frac{1}{b} \log \left(1+ b \sum_{i=1}^n |k_i(z)|^2 \right).$$ This is equivalent to
\begin{equation}\label{albert}
1+a \sum_{i=1}^m |h_i(z)|^2 = \left( 1+ b \sum_{i=1}^n |k_i(z)|^2 \right)^{a/b}.
\end{equation}
If $ab <0$, it follows from Umehara's argument \cite{[U2]} that the equation (\ref{albert}) cannot hold. Furthermore, $a/b \in \mathbb{Q}$ follows from Theorem \ref{UA}(iii).
\end{proof}

\begin{corollary}\label{intersection}
Let $a, b$ be two positive real number such that $\FS(n,b)$ and $\FS(m,a)$ are relatives.
Then there are $N \in \mathbb{N}$, $\kappa \in \mathbb{R}^+$ and holomorphic and isometric immersions $f: \FS(n,b) \to \FS(N,\kappa)$, $h: \FS(m,a) \to \FS(N,\kappa)$ such that \[ \mathrm{dim}(f(\FS(n,b)) \bigcap h(\FS(m,a))) > 0  \, \, .\]
\end{corollary}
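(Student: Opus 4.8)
The plan is to reduce the statement to Calabi's rigidity theorem for holomorphic isometric immersions into a complex space form. First, since $\FS(n,b)$ and $\FS(m,a)$ are relatives with $a,b>0$, Theorem \ref{litchi} gives $a/b \in \mathbb{Q}$. Writing $a/b = r/s$ in lowest terms with $r,s \in \mathbb{N}$, I set $\kappa := s a = r b > 0$, so that $\kappa/b = r$ and $\kappa/a = s$ are positive integers. By Calabi's embedding result \cite[Theorem 13, page 21]{[C]} --- exactly as in the proof of Theorem \ref{relativesFS} --- there are holomorphic and isometric immersions
\[ f \colon \FS(n,b) \to \FS(N,\kappa), \qquad h \colon \FS(m,a) \to \FS(N,\kappa), \]
where $N = \max\{\binom{s+m}{s}-1,\ \binom{r+n}{r}-1\}$; concretely, $f^*\omega_{\FS(N,\kappa)} = \omega_{\FS(n,b)}$ and $h^*\omega_{\FS(N,\kappa)} = \omega_{\FS(m,a)}$. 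The remaining task is to arrange that the two images meet in positive dimension, and here the relatives hypothesis --- not merely condition (ii) of Theorem \ref{relativesFS} --- is what I would exploit.

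Next I would unwind the definition of relatives. It provides a positive-dimensional K\"ahler manifold $V$ (which I may shrink to a holomorphic curve) together with holomorphic isometric immersions $\phi \colon V \to \FS(n,b)$ and $\psi \colon V \to \FS(m,a)$ whose induced metrics coincide, i.e. $\phi^*\omega_{\FS(n,b)} = \psi^*\omega_{\FS(m,a)}$. Composing with the Calabi embeddings yields two holomorphic immersions $f\circ\phi,\ h\circ\psi \colon V \to \FS(N,\kappa)$, and the chain rule together with the isometry property of $f$ and $h$ shows that they induce the same metric on $V$:
\[ (f\circ\phi)^*\omega_{\FS(N,\kappa)} = \phi^*\omega_{\FS(n,b)} = \psi^*\omega_{\FS(m,a)} = (h\circ\psi)^*\omega_{\FS(N,\kappa)}. \]
Thus $f\circ\phi$ and $h\circ\psi$ are two holomorphic isometric immersions of the same connected K\"ahler manifold $V$ into the complex space form $\FS(N,\kappa)$.

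At this point I would invoke Calabi's rigidity theorem \cite{[C]}: any two holomorphic isometric immersions of a connected K\"ahler manifold into a fixed Fubini-Study space differ by a holomorphic isometry of the ambient space. Hence there is a holomorphic isometry $\Phi$ of $\FS(N,\kappa)$ with $h\circ\psi = \Phi\circ f\circ\phi$. Replacing $f$ by $\Phi\circ f$ (still a holomorphic isometric immersion of $\FS(n,b)$), I obtain $(\Phi\circ f)(\phi(V)) = h(\psi(V))$, so that
\[ h(\psi(V)) \subseteq (\Phi\circ f)(\FS(n,b)) \cap h(\FS(m,a)). \]
Since $\phi,\psi$ are immersions of the positive-dimensional $V$, the set $h(\psi(V))$ has positive dimension, and taking $f' := \Phi\circ f$ and $h$ as the required immersions completes the argument.

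The step I expect to be delicate is the application of Calabi's rigidity: one must check that it applies to immersions, and to the local/germ-level data supplied by the relatives hypothesis, and that the resulting ambient map $\Phi$ is a genuine holomorphic isometry of $\FS(N,\kappa)$. Since $\FS(N,\kappa)$ is a definite complex space form (as $\kappa>0$) and the common submanifold may be taken connected, Calabi's original formulation applies; the positivity of the intersection dimension then follows immediately because $h\circ\psi$ is an immersion of a positive-dimensional manifold. Everything else --- producing $\kappa$ and the embeddings --- is routine given Theorem \ref{litchi} and Calabi's Theorem 13.
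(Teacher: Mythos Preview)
Your proof is correct and follows essentially the same route as the paper: use Theorem~\ref{litchi} to get $a/b\in\mathbb{Q}$, embed both spaces into a common $\FS(N,\kappa)$ via Calabi's Theorem~13, compose with the maps coming from the relatives hypothesis, and then apply Calabi's rigidity to align the two images by an ambient isometry. The only cosmetic difference is that the paper composes the isometry with the $\FS(m,a)$-side embedding rather than the $\FS(n,b)$-side one.
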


\begin{proof} Since $\frac{a}{b} \in \mathbb{Q}$ there are $r,s \in \mathbb{N}$ such that $r a = s b$. Set $\kappa := r a = s b $.
Then by Calabi's Theorem \cite[Theorem 13, page 21]{[C]} there are holomorphic and isometric immersions $f: \FS(n,b) \to \FS(n',\kappa)$ and
$j: \FS(m,a) \to \FS(m',\kappa)$. Taking $N := \max(m',n')$ we can assume that $N=m'=n'$.
Since $\FS(n,b)$ and $\FS(m,a)$ are relatives there is a K\"ahler manifold $Z$, $\mathrm{dim}(Z) > 0$ and holomorphic and isometric immersions $G: Z \to \FS(n,b)$ and $H:Z \to \FS(m,a)$. Then $ f \circ G$ and $j \circ H$ are holomorphic and isometric immersions of $Z$ into $\FS(N,\kappa)$. By Calabi's rigidity \cite[Theorem 9, page 18]{[C]} there is an isometry $u \in \mathrm{U}(N+1)$ such that $f \circ G = u \circ j \circ H$. So by setting $h := u \circ j$ we get that $f(G(Z))= h(H(Z)) \subset f(\FS(n,b)) \bigcap h(\FS(m,a))$ and $\mathrm{dim}(f(G(Z))) = \mathrm{dim}(Z) > 0 $.
\end{proof}

%More in general we have the following problem.

%\begin{problem} Let $M = \Pi_{j=1}^{j=k} F(n_j,a_j)$ (resp. $N = \Pi_{j=1}^{j=l} F(m_j,b_j)$ ) be a product of complex space forms where the $a_i$ are all of the same sign (resp. the $b_j$ are all of the sign).  Find a necessary and sufficient condition in order that $M$ and $N$ are relatives.
%\end{problem}

%Actually, the above problem can be generalized to Hermitian symmetric spaces of the same type.
%Here is a concrete example:
%Let $Q_n^a \subset F(n+1,a)$ be the quadric of equation ${x^2_0} + \cdots + {x^2_{n+1}}= 0$ in $\mathbb{CP}^{n+1}$ endowed with the K\"ahler metric.\\

%{\red Find a necessary and sufficient condition for $Q_n^a$ and $Q_m^b$ to be relatives.}
%Notice that a necessary condition similar to the one in Theorem \ref{relativesFS} can be given by using the same idea.

\begin{corollary}\label{necessary}
Let $a, b$ be two positive real number such that $a/b \in \mathbb{Q}$. Suppose that $\FS(n,b)$ and $\FS(m,a)$ are relatives. Moreover, assume $sa=rb$ for positive integers $s, r$ with $(s, r)=1$. Then \begin{equation}\label{111}
r+1 \leq \binom{s+m}{s} ~\rm{and}~ s+1 \leq \binom{r+n}{r}.
\end{equation}
\end{corollary}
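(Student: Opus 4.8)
The plan is to clear the logarithm, polarize, and then compute one common rank in two different ways. First I would argue exactly as in the proof of Theorem~\ref{litchi}: the relatives hypothesis produces a positive-dimensional common K\"ahler submanifold, which I restrict to a one-dimensional disk $U \subset \mathbb{C}$ through a point to obtain holomorphic maps $H=(h_1,\dots,h_m)$ and $K=(k_1,\dots,k_n)$ on $U$ with $h_i(0)=k_i(0)=0$ and
$$\frac1a \log\Bigl(1+a\sum_{i=1}^m |h_i|^2\Bigr)=\frac1b \log\Bigl(1+b\sum_{i=1}^n |k_i|^2\Bigr).$$
Since the induced metric on the disk is nondegenerate, neither side is constant, so some $h_{j_0}$ and some $k_{i_0}$ are non-constant. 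Multiplying the identity by $\kappa:=sa=rb$ and exponentiating turns it into $(1+a\sum|h_i|^2)^s=(1+b\sum|k_i|^2)^r$, and polarizing gives the Hermitian identity
$$\Theta(z,w):=\Bigl(1+a\sum_{i=1}^m h_i(z)\bar h_i(w)\Bigr)^s=\Bigl(1+b\sum_{i=1}^n k_i(z)\bar k_i(w)\Bigr)^r.$$

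Next I would expand both sides by the multinomial theorem. The left side becomes $\sum_{|\alpha|\le s} d_\alpha\,h^\alpha(z)\overline{h^\alpha(w)}$ with positive coefficients $d_\alpha$, where $h^\alpha=h_1^{\alpha_1}\cdots h_m^{\alpha_m}$ runs over the $\binom{s+m}{s}$ monomials of degree at most $s$; symmetrically, the right side is $\sum_{|\beta|\le r} e_\beta\,k^\beta(z)\overline{k^\beta(w)}$ over the $\binom{r+n}{r}$ monomials $k^\beta$. The key point is that for a finite kernel $\sum_p c_p\,\phi_p(z)\overline{\phi_p(w)}$ the rank of its Taylor coefficient matrix is at most the number of terms, and, when all $c_p>0$, it equals $\dim\operatorname{span}\{\phi_p\}$ (the coefficient matrix has the form $A^*EA$ with $E\succ0$ diagonal, so its rank equals $\operatorname{rank}A$, i.e. the dimension of the span of the $\phi_p$). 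Applying this to the single function $\Theta$ in its two expansions gives a well-defined common number $\rho:=\operatorname{rank}\Theta$ satisfying $\rho\le \binom{s+m}{s}$, $\rho\le\binom{r+n}{r}$, and $\rho=\dim\operatorname{span}\{h^\alpha\}=\dim\operatorname{span}\{k^\beta\}$.

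Finally I would produce the matching lower bounds from non-constancy. The $s+1$ functions $1,h_{j_0},\dots,h_{j_0}^{\,s}$ all occur among the $h^\alpha$ and are linearly independent, since a nonzero one-variable polynomial cannot vanish on the infinite image of the non-constant holomorphic function $h_{j_0}$; hence $\rho\ge s+1$. Likewise $1,k_{i_0},\dots,k_{i_0}^{\,r}$ force $\rho\ge r+1$. Combining with the upper bounds yields $r+1\le\rho\le\binom{s+m}{s}$ and $s+1\le\rho\le\binom{r+n}{r}$, which are exactly the inequalities in \eqref{111}.

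The step I expect to be the main obstacle is the middle one: justifying that $\Theta$ carries a single, basis-independent rank that can legitimately be read off from either the $h$-expansion or the $k$-expansion. This is where positivity of the multinomial coefficients is essential, as it upgrades the trivial bound ``rank $\le$ number of terms'' to the exact identity ``rank $=$ dimension of the span''; that identity is precisely what lets the lower bound coming from the powers of $k_{i_0}$ be transported across the equation to constrain $\binom{s+m}{s}$, and symmetrically for the powers of $h_{j_0}$ and $\binom{r+n}{r}$.
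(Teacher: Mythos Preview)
Your proof is correct and follows essentially the same route as the paper's. Both arguments reduce to the identity $(1+a\sum|h_i|^2)^s=(1+b\sum|k_i|^2)^r$, expand each side as a sum of norm squares of monomials, and exploit that the number of linearly independent functions in such an expansion is an invariant of the Hermitian kernel (what you call $\rho$, the paper's $P+1=Q+1$); the desired inequalities then follow by sandwiching this invariant between the obvious lower bound from powers of a single nonconstant coordinate and the upper bound from the total monomial count. The only cosmetic difference is that the paper outsources the invariance statement to Calabi's theorem (equivalently D'Angelo's result on Hermitian sums of squares), whereas you supply the short linear-algebra justification via $A^*EA$ with $E\succ 0$; your version is a bit more self-contained but not a genuinely different argument.
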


\begin{proof}
It follows from the proof of Theorem \ref{litchi} that $\FS(n, b)$ and $\FS(m,a)$ are relatives if and only if there exist holomorphic maps $H=(h_1, \cdots, h_m): U \subset \mathbb{C} \rightarrow F(m, a), K=(k_1, \cdots, k_n): U\subset \mathbb{C}  \rightarrow \FS(n, a)$  with $H(0)=0, K(0)=0$ such that
\begin{equation}\notag
\left(1+a \sum_{i=1}^m |h_i(z)|^2\right)^s = \left( 1+ b \sum_{i=1}^n |k_i(z)|^2 \right)^{r}.
\end{equation}
Note that the left hand side can be written as the sum of 1 and norm squares of $P$ linearly independent holomorphic functions with $s \leq P \leq \binom{m+s}{s}-1$ and the right hand side can be written as the sum of 1 and norm squares of $Q$ linearly independent holomorphic functions with $r \leq Q \leq \binom{n+r}{r}-1$. One knows
$P=Q$ by Calabi's theorem (cf. also \cite{[DA]}). Thus we conclude (\ref{111}).
\end{proof}

\subsection{Conditions (\ref{111}) in Corollary \ref{necessary} are not sufficient.}

Here we show that $\FS(2,1)$ and $\FS(2,\frac{3}{2})$ are not relatives. Observe that conditions (\ref{111}) in Corollary \ref{necessary} hold true.
We also give a necessary condition on $a,b$ for $\FS(2,a)$ and $\FS(2,b)$ to be relatives.\\

To show that that $\FS(2,1)$ and $\FS(2,\frac{3}{2})$ are not relatives it is enough to show that there are no $h_1,h_2,k_1,k_2$ in the local ring $\mathcal{O}_{\mathbb{C},0}$ parameterizing local curves $\{h_1, h_2\}$ and $\{k_1, k_2\}$ through $(0,0) \in \mathbb{C}^2$ such that
\begin{equation}\label{2-3} (1 + |k_1(z)|^2 + |k_2(z)|^2)^2 = (1 + |h_1(z)|^2 + |h_2(z)|^2)^3  \, \, \end{equation}
Expanding both sides we get
\[ |\sqrt{2} k_1 |^2 + |k_1^2|^2 + |\sqrt{2} k_2|^2 + |\sqrt{2} k_1 k_2|^2 + |k_2^2|^2 = \] \[ |\sqrt{3} h_1|^2 + |\sqrt{3} h_1^2|^2 + |h_1^3|^2 + |\sqrt{3} h_2|^2 + |\sqrt{6} h_1 h_2|^2 + |\sqrt{3} h_1^2 h_2|^2 + |\sqrt{3} h_2^2|^2 + |\sqrt{3} h_1 h_2^2|^2 + |h_2^3|^2 \]
where we omitted the letter $z$ from the argument of the functions.\\

According to \cite[Proposition 3, page 102]{[DA]} (or by \cite[Theorem 2, page 8]{[C]}) there is a matrix $U \in \mathbf{U}(9)$ such that
\begin{equation}\label{curves}  U \cdot H = K \end{equation}
 where \[ H := [\sqrt{3} h_1 , \sqrt{3} h_2 , \sqrt{3} h_1^2 , \sqrt{6} h_1 h_2, \sqrt{3} h_2^2, h_1^3, \sqrt{3} h_1^2 h_2, \sqrt{3} h_1 h_2^2, h_2^3]^t \]
and \[ K = [\sqrt{2} k_1, \sqrt{2} k_2,k_1^2, \sqrt{2} k_1 k_2,k_2^2,0,0,0,0]^t \, .\]

Now observe that the last four rows of $U$ can be used to define 4 linearly independent affine curves through $(0,0) \in \mathbb{C}^2$ of degree less or equal to 3. To see this, set $U = (u_{ij})$ and consider the polynomials $P_i \in \mathbb{C}[X,Y]$, $i=6,7,8,9$ defined by:
\[ P_i := u_{i 1}\sqrt{3} X + u_{i2}  \sqrt{3} Y +
u_{i 3}\sqrt{3} X^2   + u_{i4}  \sqrt{6} X Y  + u_{i5}  \sqrt{3} Y^2 +
u_{i6} X^3 + u_{i7}  \sqrt{3} X^2 Y+ u_{i8} \sqrt{3} X Y^2 + u_{i9} Y^3 \]

Then equation (\ref{curves}) implies that \[ P_i(h_1(z),h_2(z)) = 0 \]
for $i=6,7,8,9$ and $z$ near $0 \in \mathbb{C}$. Thus, $z \to (h_1(z),h_2(z))$ is a local parameterization of an irreducible connected component of each affine curve $P_i = 0$. Then the four curves share a common component $\mathcal{P}$ through $(0,0)$. So we have three possibilities $\mathrm{deg}(\mathcal{P})=1 , \mathrm{deg}(\mathcal{P})=2 $ or $\mathrm{deg}(\mathcal{P})=3 $. We will show that each of them yields a contradiction.\\

{{\bf Case} $\mathrm{deg}(\mathcal{P})=1$}. This is the same as assuming that $h_1,h_2$ are linearly dependent. So there is a function $h(z)$ and complex numbers $m_1,m_2$ such that $h_1 = m_1 h$ and $h_2 = m_2 h$. Plugging this into equation (\ref{2-3}) we get \[ (1 + |k_1(z)|^2 + |k_2(z)|^2)^2 = (1 +  |m h(z)|^2)^3 \]
where $m = \sqrt{|m_1|^2 + |m_2|^2}$. Changing the variable $z$ with  $z = h^{-1}(w)$ we get
\[ (1 + |\widetilde{k}_1(w)|^2 + |\widetilde{k}_2(w)|^2)^2 = (1 +  |m w|^2)^3 \]
a further change of variables $w = \frac{z}{m}$ gives
\[ (1 + |\widetilde{\widetilde{k}}_1(z)|^2 + |\widetilde{\widetilde{k}}_2(z)|^2)^2 = (1 +  |z|^2)^3 \]
but this is impossible. Namely, there are no holomorphic functions $\widetilde{\widetilde{k}}_1(z),\widetilde{\widetilde{k}}_2(z)$ as above since they should
give a (local) isometric and holomorphic embedding $\FS(1,1) \hookrightarrow \FS(2,\frac{3}{2})$ which contradicts Calabi's results \cite[Theorem 13, page 21]{[C]}.\\

{{\bf Case} $\mathrm{deg}(\mathcal{P})=2$}. This implies that there exist $Q \in \mathbb{C}[X,Y]$, $\mathrm{deg}(Q)=2$, such that $Q$ divides the four polynomials $P_6,P_7,P_8,P_9$. That is to say, \[ P_i = Q \cdot (a_i X + b_i Y + c_i) \]
for $i=6,7,8,9$. But then $P_6,P_7,P_8,P_9$ are not linearly independent. Contradiction.\\

{{\bf Case} $\mathrm{deg}(\mathcal{P})=3$}. In this case all polynomials are multiple of each other. Indeed, the irreducible curve parameterized by $h_1,h_2$ is an irreducible cubic hence up to multiple there is just one equation which define it. Contradiction.\\

So $\FS(2,1)$ and $\FS(2,\frac{3}{2})$ are not relatives hence conditions (\ref{111}) in Corollary \ref{necessary} are not sufficient conditions.

With the same idea we also get the following result.

\begin{theorem}\label{plane}
Let $a,b \in \mathbb{N}$, $\mathrm{gcd}(a,b)=1$, $1 < a < b$ and $a(a+3) < 4b + 2$.
Then $\FS(2,1)$ is not a relative neither of $\FS(2,\frac{b}{a})$ nor of $\FS(2,\frac{a}{b})$.
Equivalently, under the above conditions $F(2,a)$ and $F(2,b)$ are not relatives.
\end{theorem}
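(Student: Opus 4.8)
The plan is to run, uniformly in $a$ and $b$, exactly the strategy carried out above for $\FS(2,1)$ and $\FS(2,\frac32)$. By Theorem \ref{litchi} and the reduction in the proof of Corollary \ref{necessary}, it suffices to treat a single functional equation: after passing to the common curvature and rescaling, $\FS(2,a)$ and $\FS(2,b)$ (equivalently $\FS(2,1)$ and $\FS(2,\frac ba)$) are relatives if and only if there exist germs $h_1,h_2,k_1,k_2 \in \mathcal{O}_{\mathbb{C},0}$ vanishing at $0$ with
\[ \left(1 + |h_1|^2 + |h_2|^2\right)^{b} = \left(1 + |k_1|^2 + |k_2|^2\right)^{a} \, . \]
Expanding both sides into sums of squared norms of scaled monomials and invoking Calabi's rigidity \cite[Theorem 2]{[C]} (equivalently \cite[Proposition 3]{[DA]}), I would produce $U \in \mathbf{U}(N_H)$ with $U\cdot H = K$, where $H$ lists the $N_H = \binom{b+2}{2}-1$ nonconstant monomials of degree $\le b$ in $(h_1,h_2)$, and $K$ lists the $N_K = \binom{a+2}{2}-1$ monomials of degree $\le a$ in $(k_1,k_2)$, padded by $Z := N_H - N_K = \binom{b+2}{2} - \binom{a+2}{2}$ zeros. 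The last $Z$ rows of $U$ then yield linearly independent polynomials $P_1,\dots,P_Z \in \mathbb{C}[X,Y]$ of degree $\le b$, without constant term, all vanishing along the arc $z \mapsto (h_1(z),h_2(z))$; hence they share a common irreducible component $\mathcal{P}$ through the origin, of some degree $d$ with $1 \le d \le b$.

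The finish proceeds by cases on $d$, paralleling the degrees $1,2,3$ in the worked example. If $d = 1$, then $h_1,h_2$ are linearly dependent and, after the same change of variable, the equation reduces to $1 + |k_1|^2 + |k_2|^2 = (1 + |w|^2)^{b/a}$, i.e. a local holomorphic isometry $\FS(1,1) \hookrightarrow \FS(2,\frac ba)$; since $\gcd(a,b)=1$ and $a>1$ force $b/a \notin \mathbb{Z}$, this contradicts Calabi \cite[Theorem 13]{[C]}. If $d \ge 2$, let $Q$ be the degree-$d$ defining polynomial of $\mathcal{P}$; irreducibility forces $Q \mid P_i$, so each $P_i \in Q\cdot\mathbb{C}[X,Y]_{\le b-d}$, a space of dimension $\binom{b-d+2}{2}$, whence linear independence requires $Z \le \binom{b-d+2}{2}$. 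The hypothesis is exactly tuned to contradict this: $a(a+3) < 4b+2$ is equivalent to $\binom{a+2}{2} \le 2b+1 = \binom{b+2}{2}-\binom{b}{2}$, so $Z \ge \binom{b}{2}$. For $d \ge 3$ one has $\binom{b-d+2}{2} \le \binom{b-1}{2} < \binom{b}{2} \le Z$, a strict violation; for $d = 2$ one only gets $Z \ge \binom{b}{2} = \binom{b-d+2}{2}$.

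The main obstacle is precisely the borderline $d=2$ case, where the dimension count is non-strict. Equality $Z = \binom{b}{2}$ forces $\binom{a+2}{2} = 2b+1$, i.e. $a(a+3) = 4b$, and combined with $\gcd(a,b)=1$, $1<a<b$ this singles out the unique pair $(a,b)=(4,7)$, so the entire theorem reduces to disposing of this one configuration. Here $\mathcal{P}$ is an irreducible, hence smooth, conic through the origin, and reading off the first rows of $U$ shows $k_1,k_2$ are polynomial expressions of degree $\le b$ in $(h_1,h_2)$ restricted to $\mathcal{P}$. I would parameterize $\mathcal{P} \cong \mathbb{P}^1$ by its $\mathcal{O}(2)$-embedding, write $1 + |h_1|^2 + |h_2|^2$ as a ratio of Hermitian $(2,2)$-forms, and substitute into the functional equation to force $1 + |k_1|^2 + |k_2|^2$ to be the $\frac74$-power of that ratio. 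The contradiction should come from a rank-$2$ rigidity: $1 + |k_1|^2 + |k_2|^2$ is a sum of only two holomorphic squares, so its coefficient vectors span a space of dimension at most two in $\mathbb{C}^2$, which is incompatible with the genuinely transcendental $\frac74$-power — this is exactly what kills the parabola normal form $h_2 = h_1^2$ in one line, since there $1+|h_1|^2+|h_2|^2$ is a function of the single modulus $|h_1|^2$. Carrying this rank obstruction through for an arbitrary smooth conic, rather than only the parabola, is the delicate point and the crux of the argument.
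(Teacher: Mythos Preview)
Your approach is exactly what the paper intends: it states only ``with the same idea we also get the following result'', and the idea is precisely the one you wrote out---expand both sides, produce a unitary $U$ with $U\cdot H=K$, read off $Z=\binom{b+2}{2}-\binom{a+2}{2}$ linearly independent polynomials of degree $\le b$ vanishing on the arc $(h_1,h_2)$, and do a case analysis on the degree $d$ of the common component. Your treatment of $d=1$ (reduction to a forbidden Calabi embedding since $b/a\notin\mathbb{Z}$) and of $d\ge3$ (strict dimension drop $\binom{b-d+2}{2}<\binom{b}{2}\le Z$) is correct and matches the paper's template exactly.

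Where your write-up is incomplete is the borderline $d=2$ case for $(a,b)=(4,7)$, and you flag this honestly. You correctly compute that $a(a+3)<4b+2$ with $a(a+3)$ always even is equivalent to $a(a+3)\le 4b$, and that equality together with $\gcd(a,b)=1$, $1<a<b$ forces the single pair $(4,7)$, for which $Z=\binom{7}{2}=21$ exactly matches the dimension of $Q\cdot\mathbb{C}[X,Y]_{\le 5}$, so the count alone gives no contradiction. Your proposed rescue---parameterize the smooth conic, express $1+|h_1|^2+|h_2|^2$ as a ratio of Hermitian forms, and invoke a ``rank-$2$ rigidity'' against the $7/4$-power---is only a plan, not a proof; the step ``carrying this rank obstruction through for an arbitrary smooth conic'' is left undone, and it is not clear the parabola argument generalizes without further work. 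The paper gives no details here either, so one of two things is true: either the authors have an unstated extra argument for $(4,7)$, or the intended hypothesis is the slightly stronger $a(a+3)<4b$ (equivalently $\binom{a+2}{2}\le 2b$), under which $Z\ge\binom{b}{2}+1$ and the $d=2$ case falls to the same dimension count as in the worked $(2,3)$ example. Either way, your proposal has a genuine, clearly identified gap at this single pair.
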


\end{document}